\documentclass[10pt,reqno]{amsart}

\usepackage{amssymb, verbatim, enumerate}
\usepackage{amsmath}
\usepackage{graphicx}
\usepackage{epstopdf,fancybox,color}

\newtheorem{theorem}{Theorem}

\newtheorem{lemma}[theorem]{Lemma}

\theoremstyle{definition}
\newtheorem{definition}[theorem]{Definition}

\theoremstyle{remark}
\newtheorem{remark}{Remark}

\newcommand{\C}{{\mathbb{C}}}

\title[Dissipative eigenvalue problems for a singular quantum Dirac system]{Dissipative eigenvalue problems for a singular quantum Dirac system}

\author[B.P. Allahverdiev]{Bilender P. Allahverdiev}
\address[B.P. Allahverdiev]{Department of Mathematics,
              Khazar University, AZ1096 Baku, Azerbaijan and Research Center of Econophysics, UNEC-Azerbaijan State University of Economics, Baku, Azerbaijan }
\email[B.P. Allahverdiev]{bilenderpasaoglu@gmail.com}

\author[Y. Aygar]{Yelda Aygar*}
\address[Y. Aygar*]{Department of Mathematics,
              Faculty of Science,
              Ankara University,
              06100 Tando\u{g}an, Ankara, Turkey}
\email[Y. Aygar (Corresponding author)]{yaygar@ankara.edu.tr} 
\date{\today}

\subjclass[2020]{Primary 33D15, 34L10, 34L40, 39A13, 47A20, 47A40, 47B25, 47B44}
\keywords{Singular $q$-Dirac system; eigenvalue problem; dissipative operator; selfadjoint dilation; characteristic function; scattering matrix; completeness of the system  of eigenvectors and associated vectors}

\date{\today}

\begin{document}

\begin{abstract}
In this paper, dissipative singular $q$-Dirac operators are examined in Hilbert space $\mathcal{L}_{q}^{2}(q^{{\mathbb{Z}}};\mathbb{C}^{2})$, where they arise as extensions of a minimal symmetric operator in the limit-point case. We develop a selfadjoint dilation and get its incoming and outgoing spectral representations, enabling the explicit computation of the scattering matrix associated with the dilation. Furthermore, we construct a functional model for the dissipative operator and express its characteristic function in terms of the Titchmarsh-Weyl function of the corresponding selfadjoint operator. Finally, we establish results concerning the completeness of the system of eigenvectors and associated vectors for these dissipative Dirac operators.
\end{abstract}

\maketitle

\section{Introduction}\label{Sec:1}

Quantum calculus ($q$-calculus)  i.e; calculus without limits is a generalization of classical calculus where the notation of a derivative and integral is defined without taking limits. The development of $q$-calculus started in the 1740s by Euler. The progress of $q$-calculus continued under C.F. Gauss, who invented the hypergeometric series and their continuity relations and Jakson developed the $q$-derivative and $q$-integral formally. Since then, many authors studied kinds of topics in quantum calculus. Quantum calculus has a wide range of applications across mathematical physics, engineering and computational sciences. For more information on the basic definitions and theorems, we refer the reader to the books \cite{10,88,12,16}. On the other hand, $q$-difference equations become a $q$-analogue of differential equations. In this paper, we consider the $q$ analog of the Dirac system given in \cite{3}. Then, we investigate the spectral properties of the operators related to this system by using the operator theory. As you know, dissipative operators is one of the important class of operators and there are some basic methods for studying the spectral analysis of dissipative operators such as resolvent analysis, Riesz integrals and theory of dilations with applications of functional models. Note that, the development of functional models for dissipative operators, which serve as natural analogues of the spectral decompositions for selfadjoint operators, is carried out using the Sz. Nagy-Foias dilation theory \cite{20} and Lax-Phillips scattering theory \cite{18}. The spectral properties of dissipative operators have been examined in \cite{1,2,3,21,22,23} through the application of their functional model.
Firstly, we recall some necessary concepts of quantum calculus and secondly, we introduce the $q$-Dirac system that we will investigate.
Let $q$ be a positive number with $0<q<1$, $B\subset{\mathbb{R}}:=(-\infty,\infty)$. A $q$-difference equation is known as an equation that contains $q$-derivatives of a function defined on $B$. Let $f$ be a complex-valued function on $B$, then the $q$-difference operator $D_{q}$ is defined as 
\begin{equation*}
D_{q}f(t):=\frac{f(qt)-f(t)}{t(q-1)}, \quad t\in B\backslash \{0\}.
\end{equation*}
If $0\in B$, the $q$-derivative of a function $f$ at zero is defined by $(0<q<1)$
\begin{equation*}
D_{q}f(0):=\underset{n\rightarrow \infty}{\lim}\frac{f(q^{n}t)-f(0)}{q^{n}%
t},\quad \text{ }t\in B,
\end{equation*}
whenever the limit exists and does not depend on $t$. Assuming the existence of $D_{q}f(0)$, the formulation of the extension problems requires that definition $D_{q^{-1}}$ be established in a manner analogous to $D_{q}$ as 
\begin{equation*}
D_{q^{-1}}y(t):=\left \{
\begin{array}
[c]{c}%
\frac{y(t)-y(q^{-1}t)}{t(1-q^{-1})},\quad \text{ }t\in B\backslash \{0\},\\
\\
D_{q}y(0),\quad \text{ }t=0.
\end{array}
\right.
\end{equation*}
Furthermore, Jakson gives the converse of the $q$-difference operator as Jakson's $q$-integration in \cite{15} by 
\begin{equation*}
\int_{0}^{t}f(\xi)d_{q}\xi:=t(1-q)\sum_{n=0}^{\infty}q^{n}f(q^{n}t),\quad \text{ }t\in B,
\end{equation*}
if the series converges, and
\begin{equation*}
\int_{a}^{b}f(t)d_{q}t:=\int_{0}^{b}f(t)d_{q}t-\int_{0}^{a}f(t)d_{q}t,\text{
}\quad a,b\in B.
\end{equation*}
Note that the following equations are obtained by using the definitions.
\[
D_{q^{-1}}y(t)=(D_{q}y)(q^{-1}t),\text{ }D_{q}^{2}y(q^{-1}t)=qD_{q}%
[D_{q}y(q^{-1}t)]=D_{q^{-1}}D_{q}y(t).\]
In the remainder of the paper, we deal only with functions $q$-regular at zero, that is, functions satisfying $\underset{n\rightarrow \infty}{\lim}f(q^{n}t)=f(0)$ 
for all $t\in B$. It is known from the literature that the class of the functions which are $q$-regular at zero consists in continuous functions. If $f$ and $g$ are $q$-regular at zero, we can give a rule of $q$-integration by parts given by 
\[
\int_{0}^{a}g(t)D_{q}f(t)d_{q}t=(fg)(a)-(fg)(0)-\int_{0}^{a}D_{q}%
g(t)f(qt)d_{q}t.
\]
Now, let us give some information about $q$-Dirac system. Firstly, let us recall one dimensional Dirac system
\begin{align*}
- y_2' + p(x)y_1 &= \lambda y_1 \\
\phantom{-} y_1' + r(x)y_2 &= \lambda y_2,
\end{align*}
where $\lambda$ is a complex parameter, $p$ and $r$ are $q$-regular at zero. This system describes a relativistic electron in the electrostatic field (see\cite{24}). If we replace the ordinary derivatives by the $q$-derivative, we obtain the following $q$-Dirac system
\begin{align*}
-\frac{1}{q} D_{q^{-1}}y_2 + p(x)y_1 &= \lambda y_1 \\
\phantom{-}D_{q} y_1 + r(x)y_2 &= \lambda y_2,
\end{align*}
where $q$ is a positive number and is less than $1$. In \cite{7}, the authors investigate the symmetric $q$-Dirac operator. They describe dissipative, accumulative selfadjoint and the other extensions of such operators with general boundary conditions. Furthermore, they get a selfadjoint dialtion of dissipative operator and determine the scattering matrix of dilation. On the other hand in \cite{8}, the authors construct Titchmarsh-Weyl Theory for singular $q$-Dirac systems and in \cite{98}, the authors establish a Parseval equality and expansion formula in eigenfunctions for the $q$-Dirac operator on the whole line. Unlike these, in this paper, we will investigate dissipative eigenvalue problems for singular $q$-Dirac system. Let $\mathcal{L}_{q}^{2}(q^{{\mathbb{Z}}};\mathbb{C}^{2})$ (where $q^{{\mathbb{Z}}}:=\left \{  q^{n}:n\in{\mathbb{Z}}\right \}$ and 
${\mathbb{Z}}:=\{0,\pm1,\pm2,\pm3,...\}$) denote the Hilbert space consisting of all complex-vector functions defined on $q^{{\mathbb{Z}}}$ satisfying 
\begin{equation*}
\left(\int_{0}^{\infty}\left\| f(t) \right\|_{\mathbb{C}^{2}}\;d_{q}t\right)^\frac{1}{2}<\infty
\end{equation*}
and with the inner product 
\begin{equation*}
(f,g)=\int_{0}^{\infty}\left(f(t),\overline{g(t)}\right)_{\mathbb{C}^{2}}\;d_{q}t,\quad f,g\in \mathcal{L}_{q}^{2}(q^{{\mathbb{Z}}};\mathbb{C}^{2}).
\end{equation*}
In this paper, we investigate the maximal dissipative singular $q$-Dirac operators acting in $\mathcal{L}_{q}^{2}(q^{{\mathbb{Z}}};\mathbb{C}^{2})$, that the extensions of a minimal symmetric
 operator in Weyl's limit point case (with deficiency indices (1,1)). Firstly, we establish a selfadjoint dilation and its incoming and outgoing spectral representations, which satisfies determining the scattering matrix of dilation according to the Lax and Phillips method \cite{18}. Secondly, we give a functional model for the dissipative operator  and determine its characteristic function in terms of the Titchmarcsh-Weyl function of selfadjoint operator by using the spectral representations. Finally, we prove theorems on completeness of the system of eigenvectors and associated vectors of the dissipative $q$-Dirac operators with the help of the results obtained for characteristic functions.
\section{Selfadjoint dilation of the dissipative operator}\label{Sec:2}

Let us introduce the $q$-analogue of the one-dimensional Dirac system as
\begin{equation}\label{E1}
\left(
\begin{array}{cc}
0 & -\frac{1}{q}{D_{q}}^{-1} \\
D_{q} & 0
\end{array}
\right)
\begin{pmatrix}
y_1 \\
y_2
\end{pmatrix}
+
\left(
\begin{array}{cc}
p(t) & 0 \\
0 & r(t)
\end{array}
\right)
\begin{pmatrix}
y_1 \\
y_2
\end{pmatrix}
=
\lambda
\begin{pmatrix}
y_1 \\
y_2
\end{pmatrix},
\end{equation}
where $\lambda$ is a complex spectral parameter, $p$ and $r$ are real valued functions defined on $q^{\mathbb{Z}}$ and continuous at zero (or $q$-regular at zero) such that $p(t)\neq 0$ and $r(t)\neq 0$ for all $t\in q^{\mathbb{Z}}$. Furthermore; $D_{q}$ is the $q$-difference operator given in previous section and $q$ is a positive number which is less than $1$. Note that, we can write the equation \eqref{E1} by using the expression 
\[
(l_{1}y)(t) := 
\begin{cases}
-\dfrac{1}{q} D_q^{-1} y_2(t) + p(t) y_1(t) \\
\phantom{-}D_q y_1(t) + r(t) y_2(t)
\end{cases}
\]
as $(l_{1}y)(t)=\lambda y(t)$, $t\in q^{\mathbb{Z}}$.
Now, we introduce a convenient Hilbert space $H:=\mathcal{L}_{q}^{2}(q^{\mathbb{Z}};\mathbb{C}^{2})$ of vector-valued functions using the inner product
\begin{equation*}
(f,g):=\int_{0}^{\infty}\left(f(t),g(t)\right)_{\mathbb{C}^{2}}d_{q}t.
\end{equation*}
Assume that $D_{max}$ is a linear set consisting of all vector-valued functions $y=\begin{pmatrix}
y_1(t) \\
y_2(t)
\end{pmatrix}\in \mathcal{L}_{q}^{2}(q^{\mathbb{Z}};\mathbb{C}^{2})=H$ such that $y_1$ and $y_2$ are $q$-regular at zero and $l_{1}y\in H$. We define the maximal operator $L_{max}$ on $D_{max}$ by $L_{max}y={l_{1}}y$. For arbitrary vectors $y,z\in D_{max}$, we write Green's formula \cite{7,8} by
\begin{eqnarray} \label{E2}
\int_{0}^{t}\left(({l_{1}}y)(s),z(s)\right)_{\mathbb{C}^{2}}\;d_{q}s-\int_{0}^{t}(y(s),\left({l_{1}}z)(s)\right)_{\mathbb{C}^{2}}\;d_{q}s
&=&\left( {l_{1}}y,z\right)-\left(  y,{l_{1}}z\right) =\left[y,z\right]  (t)-\left[  y,z\right]  (0), 
\end{eqnarray}
for $t\in q^{\mathbb{Z}}$, where $\left[y,z\right]$ is defined as follows:
\[
\left[  y,z\right]  (t):=W\left[  y,\overline z\right](t)=y_{1}(t)\overline{z_{2}(q^{-1}t)}-\overline {z_{1}(t)}y_{2}(q^{-1}t).\]
It is evident from \eqref{E2} that the limit 
$$\left[  y,z\right]  (\infty):=\lim_{n\rightarrow \infty}\left[  y,z\right] (q^{-n})$$
exists and is finite for $y,z\in D_{max}$. On the other hand, we know that $y_{1}$ and $y_{2}$ are $q$-regular at zero. It presents us that $\lim_{n\rightarrow \infty}  y_{1}(q^{n})$ and $\lim_{n\rightarrow \infty}  y_{2}(q^{n})$ exist and are finite. As a result of this for any vector function $y\in D_{max}$, we can define $y(0)=\begin{pmatrix}
y_1(0) \\
y_2(0)
\end{pmatrix}$ by using these limits as
\begin{equation}\label{E3}
y_1(0):=\lim_{n\rightarrow \infty}  y_{1}(q^{n})\quad {\text{and}}\quad  y_2(0):=\lim_{n\rightarrow \infty}  y_{2}(q^{n}).
\end{equation}
In $H$, we consider another dense linear set $D_{min}$ consisting of finite nonzero vector-valued functions. If we denote the restriction of the operator $L_{max}$ to $D_{min}$ by $L_{0}$, then it becomes closed and symmetric from \eqref{E2} and \eqref{E3}. If we called the closure of $L_{0}$ by $L_{min}$, then the domain of $L_{min}$ consists of precisely those vectors $y\in D_{max}$ satisfying the condition 
\begin{equation}\label{E4}
y_1(0)= y_{2}(0)=0 \quad {\text{and}}\quad   \left[y,z\right](\infty)=0
\end{equation}
for arbitrary $y=\begin{pmatrix}
y_1 \\
y_2
\end{pmatrix}\in D_{max}$.
It follows from that the operator $L_{min}$ is a closed symmetric operator with defect indices $(1,1)$ or $(2,2)$, 
and satisfies $L^{*}_{min}=L_{max}$ \cite{111,13,14,24}. The operators $L_{min}$ and $L_{max}$ are called the minimal and maximal operators, respectively. Since the defect indices of $L_{min}$ $(1,1)$ or $(2,2)$ the case of limit-point occurs for $l_{1}(y)$. It is also known from \cite{8}[section5] that for the $q$-Dirac system, only the limit point case exists, the limit circle case does not occur. Now, to take one step further, let us recall the following definition.
\begin{definition}
A linear operator $A$ (with dense domain $D(A)$) acting on any Hilbert space $H$ is called dissipative (accumulative) if $\operatorname{Im}%
(Af,f) $ $\geq0$ for all $f\in D(A)$ ($\operatorname{Im}(Af,f)\leq0$) for all $f\in D(A)$ and maximal dissipative (accretive) if it does not have a proper dissipative (accretive) extension.
\end{definition}
Consider the operator $T_{h}$ with domain $D(T_{h})$ consisting of vectors $y\in D_{max}$ which satisfy the following conditions
\begin{equation}\label{E5}
y_2(0)-hy_{1}(0)=0, \quad \operatorname{Im}h>0.
\end{equation}
\begin{theorem}\label{T1}
The operator $T_{h}$ is dissipative in $\mathcal{L}_{q}^{2}(q^{\mathbb{Z}};\mathbb{C}^{2})$.
\end{theorem}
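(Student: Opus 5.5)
We will apply Green's formula \eqref{E2} with $z=y$ for $y\in D(T_h)$ and take imaginary parts. Since $(l_1y,y)-(y,l_1y)=(T_hy,y)-\overline{(T_hy,y)}=2i\operatorname{Im}(T_hy,y)$, letting $t=q^{-n}\to\infty$ in \eqref{E2} gives
\begin{equation*}
2i\operatorname{Im}(T_hy,y)=[y,y](\infty)-[y,y](0).
\end{equation*}
The existence of both limits is already guaranteed: $[y,y](\infty)$ exists for $y\in D_{max}$ as noted after \eqref{E2}, and $[y,y](0)$ is computed from the values \eqref{E3}, using $q$-regularity at zero so that $y_2(q^{-1}q^n)\to y_2(0)$ as well.

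First we dispose of the term at infinity. Since $l_1$ is in the limit-point case (only the limit point case occurs, by \cite{8}), and $L_{min}^*=L_{max}$ with deficiency indices $(1,1)$, we have $[y,z](\infty)=0$ for all $y,z\in D_{max}$. The reason is that in the limit-point case the boundary form of $L_{max}$ is supported only at the regular endpoint $0$: the defect count leaves no room for a nontrivial boundary form at $\infty$. This is the step we expect to be the main obstacle, because the paper states \eqref{E4} somewhat loosely. We would justify it by the standard argument that if $[\cdot,\cdot](\infty)$ were nonzero on $D_{max}$, the quotient $D_{max}/D_{min}$ would have dimension $4$, giving deficiency indices $(2,2)$, which contradicts the limit-point case. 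Hence $[y,y](\infty)=0$.

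Next we evaluate the term at $0$. From the definition,
\begin{equation*}
[y,y](0)=y_1(0)\overline{y_2(0)}-\overline{y_1(0)}y_2(0)=2i\operatorname{Im}\big(y_1(0)\overline{y_2(0)}\big).
\end{equation*}
Substituting the boundary condition \eqref{E5}, $y_2(0)=hy_1(0)$, gives $y_1(0)\overline{y_2(0)}=\overline h|y_1(0)|^2$, so that $[y,y](0)=-2i\operatorname{Im}h\,|y_1(0)|^2$. Therefore
\begin{equation*}
\operatorname{Im}(T_hy,y)=\operatorname{Im}h\,|y_1(0)|^2\ge 0,
\end{equation*}
since $\operatorname{Im}h>0$. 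This proves that $T_h$ is dissipative.

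If the sign convention in the paper's form $[y,z]$ turns out to produce the opposite sign, we would recheck the orientation of \eqref{E2}. In either case the identity $\operatorname{Im}(T_hy,y)=\operatorname{Im}h\,|y_1(0)|^2$ is the target.

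Density of $D(T_h)$ holds because $D(T_h)\supset D_{min}$, and $D_{min}$ is dense in $H$.
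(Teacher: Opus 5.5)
Your proof is correct and is essentially the paper's argument. You apply Green's formula \eqref{E2} with $z=y$, use the limit-point property to discard $[y,y](\infty)$, and use the boundary condition \eqref{E5} to obtain $[y,y](0)=-2i\operatorname{Im}h\,|y_1(0)|^2$, hence $\operatorname{Im}(T_hy,y)=\operatorname{Im}h\,|y_1(0)|^2\ge 0$.

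Your bookkeeping is cleaner than the paper's, which drops the factor $i$ in \eqref{E7} and asserts a strict inequality. Your worry about orientation is also unfounded: at each node $s=q^m$ the $q$-integrand in \eqref{E2} contributes exactly $[y,z](s)-[y,z](qs)$, so the sum telescopes and the sign in \eqref{E2} is right as stated.
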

\begin{proof}
Assume that $y=\begin{pmatrix}
y_1(t) \\
y_2(t)
\end{pmatrix}\in D(T_{h})$. Then, we write
\begin{equation}\label{E6}
(T_{h}y,y)-(y,T_{h}y)=\left[y,y\right](\infty)-\left[y,y\right](0).
\end{equation}
We know that limit-point case occurs at $\infty$, so it gives $\left[y,y\right](\infty)=0$. By using \eqref{E5}, we get\begin{equation}\label{E7}
\left[y,y\right](0)=-2\operatorname{Im}h\left| y_{1}(0) \right|^{2}.
\end{equation}
Substituting \eqref{E7} in \eqref{E6}, we write $\operatorname{Im}(T_{h}y,y)=\operatorname{Im}h\left| y_{1}(0) \right|^{2}$, and since $\operatorname{Im}h>0$, it gives $\operatorname{Im}(T_{h}y,y)>0$, it means that $T_{h}$ is dissipative in $\mathcal{L}_{q}^{2}(q^{\mathbb{Z}};\mathbb{C}^{2})$.\end{proof} 
It is clear from \eqref{E7} that if $\operatorname{Im}h<0$ ($\operatorname{Im}h=0$ or $h=\infty$), then $T_{h}$ becomes an accumulative (self-adjoint) operator in $\mathcal{L}_{q}^{2}(q^{\mathbb{Z}};\mathbb{C}^{2})$. Here for $h=\infty$, condition \eqref{E5} should be replaced by $y_{1}(0)=0$.
Next, we will introduce the Hilbert spaces $\mathcal{L}^{2}(\mathbb{R_{-}}):=(-\infty,0]$ and $\mathcal{L}^{2}(\mathbb{R_{+}}):=[0,\infty)$ which consist all complex-valued functions satisfying \[
\int_{-\infty}^{0}\left \vert y(t)\right \vert ^{2}dt<\infty.
\]
and \[
\int_{0}^{\infty}\left \vert y(t)\right \vert ^{2}dt<\infty,
\]
respectively, with the following inner products \begin{equation*}
(y,z)=\int_{-\infty
}^{0}y(t)\overline{z(t)}dt,  \text{ } 
(y,z)=\int_{0
}^{\infty}y(t)\overline{z(t)}dt.\text{ }
\end{equation*}%
Let call the Hilbert space 
$\mathcal{H}=\mathcal{L}%
^{2}(\mathbb{R}_{-})\oplus \mathcal{L}_{q}^{2}(q^{\mathbb{Z}};\mathbb{C}^{2})\oplus \mathcal{L}^{2}(\mathbb{R}_{+})$, with the name Hilbert space of dilation. Furthermore, we call the Sobolev space by $\mathcal{W}_{2}^{1}\left( \mathbb{R}_{\mp
}\right)$, which consists in all functions $f\in\left( \mathbb{R}_{\mp
}\right)$ such that $f$ are locally absolutely continuous functions on $\left( \mathbb{R}_{\mp
}\right)$ and $%
f^{\prime }\in \mathcal{L}^{2}(\mathbb{R}_{\pm })$. Assume that $D(M_{h})$ is the set of all vectors $f=\langle \phi _{-},y,\phi _{+}\rangle$ in $\mathcal{H}$, where $\phi _{-}\in \mathcal{W}_{2}^{1}\left( \mathbb{R}_{-}\right)$, $\phi _{+}\in \mathcal{W}_{2}^{1}\left( \mathbb{R}_{-}\right)$, and $y\in D(M_{h})$ satisfying the conditions
\begin{equation}\label{E8}
y_{2}(0)-hy_{1}(0)=\delta \phi _{-}(0),  \quad  
y_{2}(0)-\overline{h}y_{1}(0)=\delta \phi _{+}(0),
\end{equation}
where $\delta^{2}:=2\operatorname{Im}h,\quad \delta>0$. If we define 
\begin{equation}\label{E9}
M\langle \phi _{-},y,\phi _{+}\rangle=\langle i\frac{d\phi _{-}}{d\xi},{l_{1}},i \frac{d\phi _{+}}{d\tau}\rangle
\end{equation}
with $M_{h}f=Mf$ for $f\in D(M_{h})$, then we can give the following theorem.
\begin{theorem}\label{T2}
The operator $M_{h}$ is selfadjoint in $\mathcal{H}$.
\end{theorem}
\begin{proof}
Let $f=\langle \phi _{-},y,\phi _{+}\rangle, g=\langle \psi _{-},z,\psi _{+}\rangle\in D(M_{h})$ such that $y=\begin{pmatrix}
y_1(t) \\
y_2(t)
\end{pmatrix}$ and $z=\begin{pmatrix}
z_1(t) \\
z_2(t)
\end{pmatrix} $.
\end{proof}
Then with a direct calculation, we write
\begin{eqnarray}\label{E10}
(M_{h}f,g)_{\mathcal{H}}=\int_{-\infty
}^{0}i{\phi _{-}}^{\prime}\overline{\psi _{-}}d\xi+\int_{0
}^{\infty}i{\phi _{+}}^{\prime}\overline{\psi _{+}}d\tau  =i\phi _{-}(0)\overline{\psi _{-}(0)}-i\phi _{+}(0)\overline{\psi _{+}(0)}-[y,z](0)+(f,M_{h}g)_{\mathcal{H}}. 
\end{eqnarray}
By using \eqref{E8}, it follows $(M_{h}f,g)_{\mathcal{H}}=(f,M_{h}g)_{\mathcal{H}}$, i.e., $M_{h}$ is symmetric. It can be seen that $M_{h}$ and ${M_{h}}^{*}$ are generated by the same differential expression \eqref{E9}. It is clear that the equality \eqref{E10} can be rewritten as
\begin{equation}\label{E11}
i\phi _{-}(0)\overline{\psi _{-}(0)}-i\phi _{+}(0)\overline{\psi _{+}(0)}-[y,z](0)=0.
\end{equation}
On the other side, we write
\begin{equation}\label{E12}
y_{1}(0)=-\frac{i}{\delta}\left(\phi _{+}(0)-\phi _{-}(0)\right) \quad y_{2}(0)=\delta \phi _{-}(0)-\frac{ih}{\delta}\left(\phi _{+}(0)-\phi _{-}(0)\right)
\end{equation}
by using the boundary condition \eqref{E8} for $y_{1}(0)$ and $y_{2}(0)$. Substituting \eqref{E12} in \eqref{E11}, we obtain 
\begin{equation}\label{E13}
i\phi _{-}(0)\overline{\psi _{-}(0)}-i\phi _{+}(0)\overline{\psi _{+}(0)}=[y,z](0)
=-\frac{i}{\delta}\left[\phi _{+}(0)-\phi _{-}(0)\right]\overline{y_{2}(0)}-\delta\left[\phi _{-}(0)-\frac{ih}{{\delta}^2}\left(\phi _{+}(0)-\phi _{-}(0)\right)\right]\overline{z_{1}(0)}
\end{equation}
Since the values $\phi_{\pm}(0)$ can be arbitrary complex numbers, a comparison of the coefficient of $\phi_{\pm}(0)$ on the left and right of \eqref{E13} shows that the vector $g=\langle\psi _{-},z,\psi _{+}\rangle$ satisfies the following boundary conditions:
$$z_{2}(0)-hz_{1}(0)=\delta\psi _{-}(0)$$ and $$z_{2}(0)-\overline{h}z_{1}(0)=\delta\psi _{+}(0).$$ This implies that $M_{h}^{*}\subseteq M_{h}$, and hence $M_{h}^{*}=M_{h}$. So, $M_{h}$ is selfdjoint in $\mathcal{H}$. \newline
To present next Theorem, let us give a short expression. The selfadjoint operator $M_{h}$ generates a unitary group in $\mathcal{H}$ as $U_{h}(s)=\exp[iM_{h}s],\quad -\infty<s<\infty$. If we consider the mappings acting on $\mathcal{H}$ and $\mathcal{L}_{q}^{2}(q^{\mathbb{Z}};\mathbb{C}^{2})$ by $P_{1}$ and $P_{2}$ with the rules $P_{1}:\mathcal{H}\to\mathcal{L}_{q}^{2}(q^{\mathbb{Z}};\mathbb{C}^{2}),\quad 
\langle \phi _{-}, y, \phi _{+}\rangle \to y $
and $P_2 : L^2(\mathbb{R}^2) \to \mathcal{H}, \quad 
y \to \langle 0, y, 0 \rangle$, respectively. It is known that the operator family $Z_{h}(s)=P_{1}U_{h}(s)P_2$,  $s\ge0$ is strongly continuous semigroup of completely nonunitary contraction on $\mathcal{L}_{q}^{2}(q^{\mathbb{Z}};\mathbb{C}^{2})$. Let us consider the generator $B_{h}y=lim_{s\to0^{+}}\left[(is)^{-1}(Z_{h}(s)y-y)\right]$, where the domain of $B_{h}$ consists of all the vectors for which this limit exsists. The operator $B_{h}$ is a maximal dissipative operator, and selfadjoint dilation of $B_{h}$ is given as $M_{h}$.
In the next Theorem, we will show that $B_{h}=T_{h}$.
\begin{theorem}\label{T3}
The operator $M_{h}$ is a selfadjoint dilation of $T_{h}$.
\end{theorem}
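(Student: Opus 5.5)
The plan is the standard resolvent argument for Lax--Phillips/Pavlov dilations. It suffices to prove, for all $\lambda$ with $\operatorname{Im}\lambda<0$,
\begin{equation*}
P_{1}(M_{h}-\lambda I)^{-1}P_{2}y=(T_{h}-\lambda I)^{-1}y,\qquad y\in \mathcal{L}_{q}^{2}(q^{\mathbb{Z}};\mathbb{C}^{2}).
\end{equation*}
We also need $P_{1}(M_{h}-\lambda I)^{-1}P_{2}=P_{1}(M_{h}-\lambda I)^{-1}P_{2}$ composed appropriately for $\operatorname{Im}\lambda>0$ with the adjoint $T_{h}^{*}=T_{\overline h}$. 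Then the Laplace transform relates $Z_{h}(s)$ to the compressed resolvent: for $\operatorname{Im}\lambda<0$,
\begin{equation*}
(B_{h}-\lambda I)^{-1}=-i\int_{0}^{\infty}Z_{h}(s)e^{-i\lambda s}\,ds=P_{1}(M_{h}-\lambda I)^{-1}P_{2}.
\end{equation*}
Hence $(B_{h}-\lambda)^{-1}=(T_{h}-\lambda)^{-1}$, which gives $B_{h}=T_{h}$. This is the sense in which $M_{h}$ is a selfadjoint dilation of $T_{h}$. Theorem \ref{T2} guarantees that $(M_{h}-\lambda)^{-1}$ exists off the real axis, and Theorem \ref{T1} together with the maximality of $T_{h}$ (as a restriction of $L_{max}$ by a single boundary condition with deficiency indices $(1,1)$) guarantees that $(T_{h}-\lambda)^{-1}$ exists for $\operatorname{Im}\lambda<0$.

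For the key identity, fix $y$ and $\operatorname{Im}\lambda<0$. Set $z=(T_{h}-\lambda I)^{-1}y$, so $z\in D_{max}$, $l_{1}z-\lambda z=y$, and $z_{2}(0)-hz_{1}(0)=0$. Take the candidate $g=\langle 0,z,\psi_{+}\rangle$ with $\psi_{+}(\tau)=\delta^{-1}\bigl(z_{2}(0)-\overline{h}z_{1}(0)\bigr)e^{-i\lambda\tau}$. Then:
\begin{itemize}
\item $\psi_{+}\in\mathcal{W}_{2}^{1}(\mathbb{R}_{+})$ precisely because $\operatorname{Im}\lambda<0$.
\item $i\psi_{+}'-\lambda\psi_{+}=0$.
\item The first condition of \eqref{E8} holds with $\psi_{-}=0$, since $z_{2}(0)-hz_{1}(0)=0$.
\item The second condition holds by the choice of constant.
\end{itemize}
Thus $g\in D(M_{h})$ and $(M_{h}-\lambda)g=\langle 0,y,0\rangle=P_{2}y$. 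Applying $P_{1}$ gives $z$, as required.

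For $\operatorname{Im}\lambda>0$ I would argue symmetrically, taking $g=\langle\psi_{-},z,0\rangle$ with $z=(T_{h}^{*}-\lambda)^{-1}y$. Here $T_{h}^{*}$ has boundary condition $z_{2}(0)-\overline{h}z_{1}(0)=0$, and $\psi_{-}(\xi)=\delta^{-1}\bigl(z_{2}(0)-hz_{1}(0)\bigr)e^{-i\lambda\xi}$, which lies in $L^{2}(\mathbb{R}_{-})$. Alternatively this case follows by taking adjoints of the first.

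The main obstacle is the functional-analytic bookkeeping rather than the computation. First, one must confirm that $T_{h}^{*}=T_{\overline h}$ and that $T_{h}$ is maximal dissipative. I would obtain both from the Green formula \eqref{E2} with $[y,z](\infty)=0$ (limit point) and an argument like that in Theorem \ref{T2}. Second, one must justify the Laplace-transform identification of the generator of $Z_{h}$. For this I would simply invoke the standard Sz.-Nagy--Foia\c{s} fact, already quoted before the theorem, that $B_{h}$ is the maximal dissipative generator whose resolvent is the compressed resolvent of $M_{h}$. Equality of resolvents at one point then forces $B_{h}=T_{h}$.
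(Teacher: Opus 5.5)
Your proof is correct and is the paper's resolvent argument run in reverse. The paper starts from $g=(M_{h}-\lambda I)^{-1}P_{2}x$, which exists by Theorem~\ref{T2}, reads off $\psi_{\pm}(\cdot)=\psi_{\pm}(0)e^{-i\lambda\,\cdot}$, uses $\psi_{-}\in\mathcal{L}^{2}(\mathbb{R}_{-})$ to force $\psi_{-}(0)=0$ and hence $P_{1}g\in D(T_{h})$, and so needs only injectivity of $T_{h}-\lambda I$; surjectivity, i.e.\ maximal dissipativity of $T_{h}$, then comes out as a by-product. Your verification of the candidate $\langle 0,z,\psi_{+}\rangle$ instead needs $(T_{h}-\lambda I)^{-1}$ to be everywhere defined in advance, which you rightly supply via $T_{h}^{*}=T_{\overline{h}}$ or the deficiency indices $(1,1)$.

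The $\operatorname{Im}\lambda>0$ case, and the garbled tautological sentence announcing it, can be dropped, since agreement of the resolvents at a single point with $\operatorname{Im}\lambda<0$ already gives $B_{h}=T_{h}$.
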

\begin{proof}
To get the proof of theorem, let us show $B_{h}=T_{h}$. Assume that
\begin{equation}\label{E14}
(M_{h}-\lambda I)^{-1}P_{2}x=g=\langle \psi _{-},y,\psi _{+}\rangle,
\end{equation}
where $x,y\in \mathcal{L}_{q}^{2}(q^{\mathbb{Z}};\mathbb{C}^{2})$ and $\operatorname{Im}\lambda<0$. Then, we write $(M_{h}-\lambda I)g=P_{2}x$ and the equation \eqref{E11} is also equivalent to $l_{1}y-\lambda y=x$ and $\psi _{-}(\xi)=\psi _{-}(0)e^{-i\lambda\xi},$\quad $\psi _{+}(\tau)=\psi _{+}(0)e^{-i\lambda\tau}.$ Since $\psi _{-}$ belongs to $\mathcal{L}^{2}(\mathbb{R_{-}})$, then $\psi _{-}(0)=0,$ and consequently $y$ satisfies the boundary condition $y_{2}(0)-hy_{1}(0)=0.$ Therefore, $y\in D(T_{h}),$ and since a point $\lambda$ with $\operatorname{Im}\lambda<0$ can not be an eigenvalue of a dissipative operator $T_{h},$ it follows that $y=(T_{h}-\lambda I)^{-1}x.$ Further, $\psi _{+}(0)=\delta^{-1} \left (y_{2}(0)-\overline{h}y_{1}(0)\right).$ Thus \eqref{E14} is equivalent to the following equation
\begin{equation*}
    (M_{h}-\lambda I)^{-1}P_{2}x=\langle 0,(T_{h}-\lambda I)^{-1}x,\delta_{-1}\left (y_{2}(0)-\overline{h}y_{1}(0)\right)e^{-i\lambda\xi}\rangle
\end{equation*}
for $x\mathcal{L}_{q}^{2}(q^{\mathbb{Z}};\mathbb{C}^{2})$ and $\operatorname{Im}\lambda<0$. Then applying the mapping $P_{1}$ to the last equality, we find 
\begin{equation}\label{E15}
   P_{1}(M_{h}-\lambda I)^{-1}P_{2}x= (T_{h}-\lambda I)^{-1}x.
\end{equation}
On the other side, the following equality satisfies for $\operatorname{Im}\lambda<0$:
\begin{eqnarray}\nonumber
   (T_{h}-\lambda I)^{-1}&=&P_{1}(M_{h}-\lambda I)^{-1}P_{2}\\\label{E16}
  &=&-iP_{1}\int_{0
}^{\infty}U_{h}(s)e^{-i\lambda s}dsP_{2}\\ \nonumber
&=&-i\int_{0
}^{\infty}Z_{h}(s)e^{-i\lambda s}ds\\ \nonumber
&=&(B_{h}-\lambda I)^{-1}.
\end{eqnarray}
Using \eqref{E15} and \eqref{E16}, we get the proof.
\end{proof}
\section{Scattering theory of the dilation, functional model of the dissipative operators}\label{Sec:3}
In this section, we present some scattering properties of the dilation of the dissipative operators. Also, we create a functional model for dissipative operators and express its characteristic function in terms of the Titchmarch-Weyl function of the corresponding selfadjoint operator. Let us consider two solutions of the equation $(l_{1}y)(t)=\lambda y(t)=\begin{pmatrix}
\lambda y_1(t) \\
\lambda y_2(t)
\end{pmatrix},$ \quad $t\in q^{\mathbb{Z}}$ as $\phi(t,\lambda)=\begin{pmatrix}
 \phi_1(t,\lambda) \\
\phi_2(t,\lambda)\end{pmatrix}$ and $\psi(t,\lambda)=\begin{pmatrix}
 \psi_1(t,\lambda) \\
\psi_2(t,\lambda)\end{pmatrix}$ that satisfy the conditions
\begin{equation}\label{3.1}
  \phi_1(0,\lambda)=0,\quad  \phi_2(0,\lambda)=1, \quad \psi_1(0,\lambda)=1,\quad  \psi_2(0,\lambda)=0.
\end{equation}
The Titchmarsh-Weyl function $m_{\infty}$ of the selfadjoint operator $T_{\infty}$ generated by the boundary condition $y_{1}(0)=0$ is uniquely determined from the condition 
\begin{equation*}
    \psi(t,\lambda)+m_{\infty}(\lambda)\phi(t,\lambda)\in \mathcal{L}_{q}^{2}(q^{\mathbb{Z}};\mathbb{C}^{2}),\quad \operatorname{Im}\lambda\neq 0
\end{equation*}
\cite{10,11,19,20}. It is known from literature that, in general Weyl-Titchmarsh function $m_{\infty}$ is not a meromorphic function on the complex plane, but it is holomorphic function with $\operatorname{Im}\lambda\neq \textit{}0,\;\operatorname{Im}\lambda \operatorname{Im}m_{\infty}(\lambda)>0$ and $m_{\infty}$ satisfies $\overline{m_{\infty}(\lambda)}=m_{\infty}(\overline{\lambda})$ whenever $\operatorname{Im}\lambda\neq 0$ \cite{10,11}. Further, assume that the function $m_{\infty}$ is meromorphic in $\mathbb{C}$. Then it gives that any selfadjoint extension of the operator $L_{min}$ has a purely discrete spectrum (see \cite{13,14,19,24}). Due to an important property of the unitary group $U_{h}(s)=exp[iM_{h}s]$, $-\infty<s<\infty$, we can apply the Lax-Phillips scheme \cite{18} to it. To state more clearly, it has incoming and outgoing subspaces $\mathcal{D}_{-}:=\left \langle \mathcal{L}^{2}(\mathbb{R}%
_{-}),0,0\right \rangle $ and $\mathcal{D}_{+}:=\left \langle 0,0,\mathcal{L}%
^{2}(\mathbb{R}_{+})\right \rangle$ satisfying the following items:
\begin{itemize}
\item [1)] $U_{h}(s)\mathcal{D}_{-}\subset \mathcal{D}_{-},\: s\leq
0,$ and $U_{h}(s)\mathcal{D}_{+}\subset \mathcal{D}_{+},\: s\geq 0$;\\
\item [2)]  $\underset{s\leq 0}{\bigcap }U_{h}(s)\mathcal{D}_{-}=%
\underset{s\geq 0}{\bigcap }U_{h}(s)\mathcal{D}_{+}=\left\{ 0\right\}$;\\
\item [3)] $\overline{\underset{s\geq 0}{\bigcup }U_{h}(s)\mathcal{D}_{-}}=\overline{\underset{s\leq 0}{\bigcup }U_{h}(s)\mathcal{D}_{+}}=\mathcal{H}$;\\
\item [4)] $\mathcal{D}_{-}\bot \mathcal{D}_{+}.$
\end{itemize}
Since the proof of these items for $\mathcal{D}_{+}$ and for $\mathcal{D}_{-}$ are similar, we only prove them for $\mathcal{D}_{+}$. To prove item (1), we firstly set $R_{\lambda}(M_{h}-\lambda I)^{-1}$ for all $\lambda$ with $\operatorname{Im}\lambda<0$. Then, for any $f=\left \langle 0,0,\phi_{+}\right \rangle\in\mathcal{D}_{+}$, we write $$R_{\lambda}f=\left \langle 0,0,-ie^{i\lambda\xi}\int_{0
}^{\xi}e^{i\lambda s}{\phi _{+}}(s)ds\right \rangle.$$ It means that $R_{\lambda}f\in \mathcal{D}_{+}$. Therefore, if $g\bot\mathcal{D}_{+}$, then for $\operatorname{Im}\lambda<0$, we write $$(R_{\lambda}f,g)_{\mathcal{H}}=-i\int_{0}^{\infty}e^{-i\lambda s}{(U_{h}f,g)_{\mathcal{H}}}ds=0$$ and hence $(U_{h}f,g)_{\mathcal{H}}=0$ for all $s\geq0$. It follows from that $U_{h}(s)\mathcal{D}_{+}\subset \mathcal{D}_{+}$ for $s\geq0$ and it completes the proof of (1). 
To present the proof of (2), let us represent the mappings defined by $\mathcal{P}%
_{1}^{+}:\mathcal{H}%
\rightarrow \mathcal{L}^{2}\left( \mathbb{R}_{+}\right) $ and $\mathcal{P}%
_{2}^{+}:\mathcal{L}^{2}(\mathbb{R}_{+})\rightarrow \mathcal{D}_{+}$ as 
$\mathcal{P}_{1}^{+}:\left\langle
\phi_{-},y,\phi_{+}\right\rangle $ $\rightarrow \phi_{+}$ and $%
\mathcal{P}_{2}^{+}:\phi \rightarrow \left\langle 0,0,\phi\right\rangle,$ respectively. Note that the semigroup of isometries $U_{h}^{+}(s)=\mathcal{P}_{1}^{+}U_{h}(s)\mathcal{P}_{2}^{+},\:s\geq 0,$ is a one-sided shift in $\mathcal{L}^{2}\left( \mathbb{R}_{+}\right)$. Indeed, the generator of the semigroup corresponding to the one-sided shift $V_{h}$ in $\mathcal{L}^{2}\left( \mathbb{R}_{+}\right)$ is the differential operator $i\frac{d}{d\xi }$ with the boundary condition $\phi\left( 0\right) =0.$ On the other hand, the generator $S$ of the semigroup of isometries $U_{h}^{+}(s)$ in $\mathcal{L}^{2}\left( \mathbb{R}_{+}\right)$ is the operator $S\phi=\mathcal{P}_{1}^{+}M_{h}\mathcal{P}_{2}^{+}\phi$ $=\mathcal{P}_{1}^{+}M_{h}\left\langle 0,0,\phi\right\rangle $ $=\mathcal{P}_{1}^{+}\langle 0,0,i\frac{d\phi}{d\xi }\rangle$ $=i\frac{d\phi }{d\xi },$
where $\phi \in \mathcal{W}_{2}^{1}\left( \mathbb{R}_{+}\right) $ and $
\phi \left( 0\right) =0.$ Since a semigroup is uniquely determined by its generator, it gives that $U_{h}^{+}(s)=V_{h}(s)$, and hence 
\begin{equation*}
\underset{s\geq 0}{\bigcap }U_{h}^{+}(s)\mathcal{D}^{+}=\langle 0,0,%
\underset{s\geq 0}{\bigcap }V_{h}(s)\mathcal{L}^{2}\left( \mathbb{R}%
_{+}\right)\rangle =\{0\},
\end{equation*}%
that is, the proof of (2) is completed. The proof of item (4) is clear. It is also known from Lax-Phillips scattering theory that the scattering matrix is defined via the theory of spectral representations. In the process, we also establish item (3) concerning the incoming and outgoing subspaces. Firstly, we will prove the following theorem after giving the definition of the completely non-selfadjoint operator.
\begin{definition}
A linear operator $A$ with the domain $D(A)$ acting in a Hilbert space $H$ is called completely non-selfadjoint or simple if there is no invariant subspace $N\:(N\neq \{0\})$ of the operator $A$ such that $D(A)\supseteq N$. Here, the restriction of $A$ to $N$ is selfadjoint \cite{5,17,23}.
\end{definition}
\begin{theorem}\label{T3*}
The operator $T_{h}$ is completely non-selfadjoint (simple).
\end{theorem}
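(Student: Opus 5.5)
We argue by contradiction. Suppose there is a nontrivial subspace $N\subseteq D(T_{h})$, invariant under $T_{h}$, on which the restriction $T_{h}|_{N}$ is selfadjoint. The plan is to use the dissipativity identity from the proof of Theorem \ref{T1}, propagate the resulting boundary condition through $N$, and invoke uniqueness for the $q$-difference system.

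\emph{Vanishing of the boundary value.} For every $y\in N$, selfadjointness of $T_{h}|_{N}$ gives $\operatorname{Im}(T_{h}y,y)=0$. By \eqref{E6}--\eqref{E7} this equals $\operatorname{Im}h\,|y_{1}(0)|^{2}$. Since $\operatorname{Im}h>0$, we get $y_{1}(0)=0$. The boundary condition \eqref{E5} then forces $y_{2}(0)=hy_{1}(0)=0$. So every $y\in N$ satisfies $y_{1}(0)=y_{2}(0)=0$.

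\emph{Passing to eigenvectors.} A selfadjoint operator does not in general have eigenvectors, so we should not rely on eigenvectors of $T_h|_N$ in the abstract. Two routes are available.

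\begin{itemize}
\item[(a)] If the standing assumption that $m_{\infty}$ is meromorphic is used, every selfadjoint extension of $L_{min}$ has purely discrete spectrum. Then $T_{h}|_{N}$, being a selfadjoint part of $T_h$, also has compact resolvent, so $N$ is spanned by eigenvectors of $T_{h}|_{N}$.
\item[(b)] Alternatively, consider the resolvents. Since $N$ reduces $T_h|_N$, for $\operatorname{Im}\lambda<0$ the vector $(T_{h}-\lambda I)^{-1}x$ lies in $N$ whenever $x\in N$. Write $y=(T_h-\lambda)^{-1}x$; it satisfies $l_1y-\lambda y=x$ with $y(0)=0$. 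By variation of constants with the fundamental system $\phi,\psi$ from \eqref{3.1}, this yields orthogonality relations between $x$ and the Weyl solution $\chi(t,\lambda)=\psi(t,\lambda)+m_\infty(\lambda)\phi(t,\lambda)$, which should force $x=0$.
\end{itemize}

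I would take route (a). Let $y\in N$ be an eigenvector with $l_{1}y=\lambda y$, where $\lambda$ is real since $T_h|_N$ is selfadjoint. Then $y$ solves \eqref{E1} with the initial conditions $y_{1}(0)=y_{2}(0)=0$.

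\emph{Uniqueness for the initial value problem.} The $q$-Dirac system \eqref{E1} has a unique solution for prescribed values $y_1(0),y_2(0)$. This is the same fact used to define $\phi$ and $\psi$ in \eqref{3.1}, and it follows from writing the system as a $q$-integral equation and iterating (successive approximations converge since $p,r$ are $q$-regular at zero). Hence $y\equiv 0$, contradicting that $y$ is an eigenvector. Therefore $N=\{0\}$, and $T_{h}$ is simple.

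\emph{Main obstacle.} The hardest step is justifying the reduction from "$N$ is an invariant selfadjoint part" to "$N$ contains an eigenvector, or is annihilated via resolvent identities". I would rely on the discreteness of the spectrum granted by the meromorphy of $m_{\infty}$, as in route (a). If that assumption is to be avoided, I would carry out route (b) instead: use the resolvent representation through $\phi$ and $\chi$ for all nonreal $\lambda$, and deduce from analyticity in $\lambda$ that the generalized Fourier transform of $x$ vanishes. The uniqueness of the initial value problem at $t=0$ relies on $q$-regularity and is comparatively routine.
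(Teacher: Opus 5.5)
Your proof is correct and follows essentially the paper's argument: both boundary values vanish on the selfadjoint part, uniqueness of the Cauchy problem kills every eigenvector there, and discreteness of the spectrum (from the meromorphy of $m_{\infty}$) plus the eigenfunction expansion forces the subspace to be trivial. The only difference is that you get $y_{1}(0)=0$ from $\operatorname{Im}(T_{h}y,y)=\operatorname{Im}h\,|y_{1}(0)|^{2}$, whereas the paper imposes the adjoint-type boundary condition with $\overline{h}$; your derivation is, if anything, the more direct of the two.
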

\begin{proof}
Assume that $T_{h}$ is not completely non-selfadjoint. Then there exists a nontrivial subspace of $\mathcal{L}_{q}^{2}(q^{\mathbb{Z}};\mathbb{C}^{2})$ in which $T_{h}$ has a selfadjoint part $T_{h}$$^{\thicksim}$ in it. Let us call this subspace by $H^{\thicksim }$. If $f\in D(T_{h}^{\thicksim})$, then $f$ becomes in $D({T_{h}^{\thicksim}}^{\ast})$, and
\begin{align*}
f_{2}(0)-h f_{1}(0)  &= 0 \\
\phantom{-} f_{2}(0)-\overline{h}f_{1}(0) &= 0, 
\end{align*} 
such that $f=\begin{pmatrix}
f_1 \\
f_2
\end{pmatrix}$. From this for the eigenvectors $f(t,\lambda)$
of the operator $T_{h}$ that lie in $H^{\thicksim}$ and are eigenvectors of $T_{h}$$^{\thicksim}$, we have $f_{1}(0,\lambda)=0$, from the boundary condition $f_{2}(0)-h f_{1}(0)= 0$, we get $f_{2}(0,\lambda)=0$, and then we have $f(t,\lambda) \equiv 0$ by using the uniqueness theorem of the Cauchy problem for the system $(l_{1}f)(t)=\lambda f(t),\: t\in q^{\mathbb{Z}}$. Since $m_{\infty}(\lambda)$ is a meromorphic function in $\C$, it can be concluded that the resolvent $\mathcal{R}(T_{h})$ of the operator $T_{h}$ is a compact operator and hence the spectrum of $T_{h}$$^{\thicksim}$ is purely discrete. Then by using the theorem giving for expansion in eigenfunctions of the selfadjoint operator $T_{h}$$^{\thicksim}$, we write $H^{\thicksim}=\{0\}$, it means that the operator $T_{h}$ is simple and it completes the proof of Theorem.\end{proof}
To write the proof of item (3), let us set 
\begin{equation*}
H_{-}=\overline{\underset{s\geq 0}{\bigcup }U_h(s)\mathcal{%
D}_{-}},\quad {H}_{+}=\overline{\underset{s\leq 0}{\bigcup }U_{h}%
(s)\mathcal{D}_{+}}.
\end{equation*}
\begin{lemma}\label{L7}
The equality $H_{-}+{H}_{+}=\mathcal{H}$ satisfies. 
\end{lemma}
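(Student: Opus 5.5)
This is the standard Lax--Phillips argument for dissipative extensions. The plan is to show that the closed subspace $H_{-}+H_{+}$ is reducing for the unitary group $U_h(s)$ and that the part of $T_h$ living in its orthogonal complement is selfadjoint. Theorem \ref{T3*} (simplicity of $T_h$) then forces that complement to be trivial.

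\textbf{Step 1: invariance and the structure of the complement.} Let $H'=\mathcal{H}\ominus(H_{-}+H_{+})$. By item (1), $U_h(s)H_{-}\subset H_{-}$ for $s\geq0$. Since $H_{-}$ is defined as the closure of $\bigcup_{s\ge 0}U_h(s)\mathcal{D}_-$ and $U_h(s)\mathcal{D}_-\subset\mathcal{D}_-$ for $s\le0$, the space $H_{-}$ is in fact invariant under the whole group. The same holds for $H_{+}$. Hence $H'$ is invariant under every $U_h(s)$, and therefore reduces $M_h$. Because $H'\perp\mathcal{D}_{\pm}$, every element of $H'$ has the form $\langle 0,y,0\rangle$. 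So $H'=P_2H''$ for a closed subspace $H''\subset\mathcal{L}_{q}^{2}(q^{\mathbb{Z}};\mathbb{C}^{2})$, and on $H''$ the compression $Z_h(s)=P_1U_h(s)P_2$ acts as a unitary group.

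\textbf{Step 2: the part of $T_h$ in $H''$ is selfadjoint.} Take $\langle0,y,0\rangle\in D(M_h)\cap H'$. The boundary conditions \eqref{E8} with $\phi_\pm=0$ give $y_2(0)-hy_1(0)=0$ and $y_2(0)-\overline{h}y_1(0)=0$. Using the resolvent identity \eqref{E15}, $H''$ is invariant under $(T_h-\lambda I)^{-1}$ for $\operatorname{Im}\lambda<0$. Restricted to $H''$, the operator $T_h$ coincides with $M_h|_{H'}$, which is selfadjoint because $H'$ reduces $M_h$. Thus $T_h$ has a selfadjoint part in $H''$. This is exactly the situation excluded in the proof of Theorem \ref{T3*}: the two boundary conditions force $y_1(0)=y_2(0)=0$, so eigenvectors vanish identically by uniqueness for the Cauchy problem, and discreteness of the spectrum (meromorphy of $m_\infty$) then gives $H''=\{0\}$. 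Hence $H'=\{0\}$.

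\textbf{Step 3: closing the argument.} If the sum is read as a closed linear span, Step 2 already gives $\overline{H_-+H_+}=\mathcal{H}$. I expect the main obstacle to lie in Step 1, in checking rigorously that the orthogonal complement consists only of vectors of the form $\langle0,y,0\rangle$ and that $H'$ truly reduces $M_h$, including the correct matching of domains. If the direct invariance argument for $H_{\pm}$ under the full group is delicate, I would instead use the resolvent form. I would take $g\perp H_-+H_+$ and test against $(M_h-\lambda I)^{-1}$ applied to elements of $\mathcal{D}_\pm$, computed as in item (1), to conclude that $(M_h-\lambda I)^{-1}g$ is again orthogonal to $\mathcal{D}_\pm$ for all nonreal $\lambda$.
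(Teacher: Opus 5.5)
Your proposal is correct and follows essentially the same route as the paper. The paper also shows that $\mathcal{H}\ominus(H_{-}+H_{+})$ is invariant under $\{U_h(s)\}$ and has the form $\langle 0,H^{\thicksim},0\rangle$, so that $T_h$ would have a selfadjoint part in $H^{\thicksim}$, which the simplicity of $T_h$ (Theorem \ref{T3*}) rules out; your extra steps on invariance of $H_\pm$ under the whole group, on the domain/resolvent matching, and on reading the sum as a closed span spell out what the paper leaves implicit.
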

\begin{proof}
 Considering item (1) of the subspace $\mathcal{D}_{+}$, it is easy for us to show that the subspace $\mathcal{H}^{\thicksim}=\mathcal{H})\oplus(H_{-}+{H}_{+})$ is invariant under the group $\{U_{h}(s)\}
$ and has the form $\mathcal{H}^{\thicksim}=\langle0,{H}^{\thicksim},0\rangle$, where ${H}^{\thicksim}$ is a subspace of $\mathcal{L}_{q}^{2}(q^{\mathbb{Z}};\mathbb{C}^{2})$. Therefore, if the subspace $\mathcal{H}^{\thicksim}$ and also ${H}^{\thicksim}$ were not trivial, the unitary group $\{U_{h}\}^{\thicksim}$, restricted to this subspace, would be a unitary part of the group $\{U_{h}\}$, and hence the restriction of $T_{h}$ to ${H}^{\thicksim}$ would be a selfadjoint operator in ${H}^{\thicksim}$. Since $T_{\alpha}$ is simple, it gives ${H}^{\thicksim}=\{0\}$, i.e., $\mathcal{H}^{\thicksim}=\{0\}$. It completes the proof of Lemma.
\end{proof}
Let us introduce the following notations to present next results:
$w(x,\lambda):=\psi(x,\lambda)+m_{\infty}(\lambda)\phi(x,\lambda)$,
\begin{equation}\label{3.2}
S _{h}(\lambda ):=\frac{m_{\infty}(\lambda )-h}{m_{\infty}(\lambda )-\overline{h}}. 
\end{equation}
Now, consider the vectors $U_{\lambda}^{\pm}(x,\xi,\tau)$ as
\begin{equation}\label{3.3}
 U_{\lambda}^{-}(x,\xi,\tau)=\langle \exp\{-i\lambda\xi\},(m_{\infty}(\lambda)-h)^{-1}\delta w(x,\lambda),\overline{S_{h}}(\lambda)exp\{-i\lambda\tau\}\rangle
\end{equation}
and
\begin{equation}\label{3.4}
 U_{\lambda}^{+}(x,\xi,\tau)=\langle S_{h}(\lambda)\exp\{-i\lambda\xi\},(m_{\infty}(\lambda)-\overline{h})^{-1}\delta w(x,\lambda),exp\{-i\lambda\tau\}\rangle.
\end{equation}
Note that these vectors do not belong to the space $\mathcal{H}$ for real $\lambda$. However, these vectors satisfy the equation $MU=\lambda U$ and the boundary conditions for $M_{h}$. We define the following transformations $F_{\pm}:f\to \tilde f_{\pm}(\lambda)$ using the vectors $U_{\lambda}^{\pm}(x,\xi,\tau)$ for the next steps as follows
\begin{equation*}
(F_{\pm}f)=\tilde f_{\pm}(\lambda)=\frac{1}{\sqrt {2\pi}}(f,U_{\lambda}^{\pm})_{\mathcal{H}},
\end{equation*}
on the vector $\langle\phi_{-},y,\phi_{+}\rangle$, where $\phi_{+},y,\phi_{-},$ are compactly supported smooth functions. Assume $f=\langle\phi_{-},0,0\rangle$ and $g=\langle\psi_{-},0,0\rangle$ in $D_{-}$. Then, the equality 
\begin{equation*}
\tilde f_{-}(\lambda)=\frac{1}{\sqrt {2\pi}}(f,U_{\lambda}^{-})_{\mathcal{H}}=\frac{1}{\sqrt {2\pi}}\int_{-\infty}^{0}\phi_{-}(s)\exp{(i\lambda s)}ds
\end{equation*} 
satisfies. Hence $\tilde f_{-}(\lambda)$ belongs to $\mathcal{H}^{2}_{-}$. Now, consider the dense set $\tilde H_{-}$ in $H_{-}$ consisting of all vectors $f$ such that $f$ is a compactly supported function in $D_{-}$ and $f=U_{h}(s)f_{0}$, $f_{0}=\langle\phi_{-},0,0\rangle$, $\phi_{-}\in C^{\infty}_{0}(\mathbb{R}_{-})$, where $C^{\infty}_{0}(\mathbb{R}_{-})$ is the set of all smooth compactly supported functions defined in $\mathbb{R}_{-}$ and  $s=sf$ is a nonnegative number. In this case, if $f,g\in H_{-}$, we have for $s>s_{f}$ and $s>s_{g}$ that $U_{h}(-s)f$, $U_{h}(-s)g\in D_{-}$ and their first components belong to $C^{\infty}_{0}(\mathbb{R}_{-})$. Since the operators $U_{h}$ $(-\infty<h<\infty)$ are unitary, by using the equality
\begin{equation*}
F_{-}U_{h}f=(U_{h}f,U^{-}_{\lambda})_{\mathcal{H}}=e^{i\lambda h}(f, U^{-}_{\lambda})_{\mathcal{H}} = e^{i\lambda h}F_{-}f,
\end{equation*} 
we write
\begin {equation}\label{3.5}
\begin{aligned}
(f,g)_{\mathcal{H}} &= (U_h(-s)f, U_h(-s)g)_{\mathcal{H}} 
= (F_{-}U_h(-s)f, F_{-} U_h(-s)g)_{\mathcal{L}^{2}} \\
&= (e^{-i\lambda s}F_{-}f, e^{-i\lambda s}F_{-}g)_{\mathcal{L}^{2}} 
= (F_{-}f, F_{-}g)_{\mathcal{L}^{2}}.
\end{aligned}
\end{equation}
By taking closure in \eqref{3.5}, we get Parseval equality for the space $H_{-}$. Then the inversion formula $f=\frac{1}{\sqrt {2\pi}}\int_{-\infty}^{\infty}\tilde f_{-}(\lambda)\overline{U^{-}_{\lambda}}d\lambda$
follows from the Parseval equality if all integrals are taken as limits in the mean of the intervals. Finally, we obtain
\begin{equation*}
F_{-}H_{-}=\overline{\underset{s\geq 0}{\bigcup }F_{-}U_h(s)\mathcal{%
D}_{-}}=\overline{\underset{s\geq 0}{\bigcup }e^{-i\lambda s}\mathcal{H}_{-}^{2}}={\mathcal{L}^{2}}(\mathbb{R}).
\end{equation*}
Last equality implies that $H_{-}$ is isometrically identical with ${\mathcal{L}^{2}}(\mathbb{R})$ and for all vectors $f,g\in H_{+}$, the Parseval equality and the inversion formula can be written as
\begin{equation*}
(f,g)_{\mathcal{H}}=(\tilde f_{+},\tilde g_{+})_{\mathcal{L}^{2}}=\int_{-\infty}^{\infty}\overline{f_{+}}(\lambda)\overline{\tilde g_{+}(\lambda)}d\lambda,\qquad f=\frac{1}{\sqrt {2\pi}}\int_{-\infty}^{\infty}\tilde f_{+}(\lambda)U^{+}_{\lambda}d\lambda
\end{equation*}
where $\tilde f_{+}(\lambda)=(F_{+}f)(\lambda)$ and $\tilde g_{+}(\lambda)=(F_{+}g)(\lambda)$. Furthermore, the function $S_{h}(\lambda)$ satisfies $\left| S_{h}(\lambda) \right|=1$ fpr $\lambda\in \mathbb{R}$ according to \eqref{3.2}. Hence the explicit expressions for the vectors $U^{+}_{\lambda}$ and $U^{-}_{\lambda}$ that 
\begin{equation}\label{3.6}
U^{-}_{\lambda}= \overline{S_{h}}U^{+}_{\lambda},\quad \lambda\in\mathbb{R}.
\end{equation}
It follows from these expressions that $H_{-}=H_{+}$ and if we consider this with Lemma 7, we obtain $\mathcal{H}=H_{-}=H{+}$, and property (3) has been constructed for the incoming and outgoing subspaces.
Thus, the transformation $F_{-}$ is defined as isometric mapping from 
$\mathcal{L}^{2}(\mathbb{R})$ to subspace $D_{-}$, which is carried out in $H^{2}_{-}$, while the operators $U_{h}$ transform into multiplication operators by $S$. In other words, $F_{-}$ is the incoming spectral representation and $F_{+}$ is the outgoing spectral representation for the group ${U_{h}}$. It follows from \eqref{3.6} that the passage from the $F_{+}$-representation of a vector $f\in \mathcal{H}$ to its $F_{-}$-representation is realized by multiplication of the function $S_{h}(\lambda):=\tilde f_{-}(\lambda)=S_{h}(\lambda)\tilde f_{+}(\lambda)$. As stated in \cite{18}, the scattering function (matrix) of the group $\{U_{h}\}$ with respect to the spaces $D_{-}$ and $D_{+}$ is the factor by which the $F_{-}$-representation of a vector $f\in\mathcal{H}$ must be scaled to obtain its corresponding $F_{+}$-representation $\tilde f_{+}(\lambda)=\overline{S_{h}}(\lambda)\tilde f_{-}(\lambda)$, and thus we present the following statement.
\begin{theorem}\label{T4}
The function $\overline{S_{h}}$ is the scattering  matrix of the group $\{U_{h}\}$, i.e.; of the selfadjoint operator $M_{h}$.
\end{theorem}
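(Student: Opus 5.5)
The plan is to read Theorem \ref{T4} as a consequence of the Lax--Phillips framework: once $F_{-}$ and $F_{+}$ are shown to be the incoming and outgoing spectral representations of $\{U_{h}\}$, the scattering matrix is by definition the multiplier taking one representation to the other.

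\textbf{Step 1 (eigenvector properties).} I would first verify directly that the vectors $U^{\pm}_{\lambda}$ from \eqref{3.3}--\eqref{3.4} satisfy $MU=\lambda U$ componentwise. This holds because $l_{1}w=\lambda w$ and $i\frac{d}{d\xi}e^{-i\lambda\xi}=\lambda e^{-i\lambda\xi}$. I would then check that they satisfy the boundary conditions \eqref{E8}. Using $w(0,\lambda)=\langle 1,m_{\infty}(\lambda)\rangle$ from \eqref{3.1}, the condition $w_{2}(0)-hw_{1}(0)=m_{\infty}-h$ gives, for $U^{-}_{\lambda}$,
\[
\delta\,\phi_{-}(0)=(m_{\infty}-h)^{-1}\delta\,(m_{\infty}-h)=\delta .
\]
Similarly the second condition gives $\delta(m_{\infty}-\overline h)/(m_{\infty}-h)=\delta\,\overline{S_{h}}$ for real $\lambda$, using $\overline{m_{\infty}(\lambda)}=m_{\infty}(\overline\lambda)$. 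The computation for $U^{+}_{\lambda}$ is analogous.

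\textbf{Step 2 (spectral representations).} From the identity $F_{\pm}U_{h}(s)f=e^{i\lambda s}F_{\pm}f$, $F_{\pm}$ intertwine $U_{h}(s)$ with multiplication by $e^{i\lambda s}$. The computation before \eqref{3.5} shows that $F_{-}\mathcal D_{-}=\mathcal H^{2}_{-}$. The analogous computation for $f=\langle0,0,\phi_{+}\rangle$ gives
\[
F_{+}f=\frac{1}{\sqrt{2\pi}}\int_{0}^{\infty}\phi_{+}(s)e^{i\lambda s}\,ds\in\mathcal H^{2}_{+}.
\]
Combining the Parseval equality with Lemma \ref{L7} and $H_{-}=H_{+}=\mathcal H$, I would conclude that $F_{\pm}$ are unitary onto $\mathcal L^{2}(\mathbb R)$. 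They are therefore the incoming and outgoing spectral representations in the sense of \cite{18}.

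\textbf{Step 3 (the scattering operator).} By \eqref{3.6}, $U^{-}_{\lambda}=\overline{S_{h}}(\lambda)U^{+}_{\lambda}$ for real $\lambda$. Taking inner products and using $|S_{h}|=1$, I would obtain
\[
\tilde f_{-}(\lambda)=\frac{1}{\sqrt{2\pi}}(f,\overline{S_{h}}U^{+}_{\lambda})=S_{h}(\lambda)\tilde f_{+}(\lambda),
\]
that is, $\tilde f_{+}=\overline{S_{h}}\,\tilde f_{-}$. Thus $F_{+}F_{-}^{-1}$ is multiplication by $\overline{S_{h}}$, which is the Lax--Phillips definition of the scattering matrix. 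Following the convention stated just before the theorem, this identifies $\overline{S_{h}}$ as the scattering matrix.

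\textbf{Main obstacle.} The delicate point is justifying \eqref{3.6} and the inner products for real $\lambda$, since $U^{\pm}_{\lambda}\notin\mathcal H$ there and $m_{\infty}$ has only boundary values on $\mathbb R$. I would handle this by working with compactly supported smooth $f$, where the pairings are finite. I would then use the meromorphy assumption on $m_{\infty}$, which makes it real-analytic off its poles on $\mathbb R$ with $m_{\infty}(\lambda)$ real. This gives $|S_{h}|=1$, and the relation extends by density and continuity of $F_{\pm}$.
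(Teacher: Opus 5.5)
Your proposal is correct and follows essentially the same route as the paper: the generalized eigenvectors $U^{\pm}_{\lambda}$, the incoming and outgoing representations $F_{\pm}$ with the Parseval equality and $H_{-}=H_{+}=\mathcal{H}$, and then \eqref{3.6}, which gives $\tilde f_{-}=S_{h}\tilde f_{+}$, i.e.\ $\tilde f_{+}=\overline{S_{h}}\,\tilde f_{-}$. Your explicit check of the boundary conditions \eqref{E8} for $U^{\pm}_{\lambda}$, using $w(0,\lambda)=(1,m_{\infty}(\lambda))$ and the reality of $m_{\infty}$ on $\mathbb{R}$, fills in a step the paper only asserts.
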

To present next Theorem, let us give the following information.
Let $S(\lambda)$ be an arbitrary inner function \cite{20} on the upper half plane. Define $\mathcal{K}=H^{2}_{+}\oplus SH^{2}_{+}$. Then $\mathcal{K}\neq\{0\}$ is a subspace of the Hilbert space $H^{2}_{+}$. We consider the semigroup of the operators $Z_{h}\:(h\geq0)$ acting in $\mathcal{K}$ according to the formula $Z_{h}\phi=P_{1}[e^{i\lambda h}\phi]$, $\phi:=\phi(\lambda)\in\mathcal{K}$, where $P_{1}$ is the orthogonal projection from $H^{2}_{+}$ onto $\mathcal{K}$. The operator of the semigroup $Z_{h}$ is denoted by
$$L:L\phi=\lim_{h\to+0}(ih)^{-1}(Z_{h}\phi-\phi),$$ which is a dissipative operator that acts in $\mathcal{K}$ and with the domain $D(L)$ consisting of all functions $\phi\in\mathcal{K}$, such that the limit exists. The operator $L$ is called a model dissipative operator (this model dissipative operator, which is associated with the names of Lax and Phillips \cite{18}, is a special case of a more general model dissipative operator established by Sz. Nagy and Foia\c{s} \cite{20}). The basic report is that $S(\lambda)$ is the characteristic function of the operator $L$.
Let $\textbf{K}=\langle 0,\mathcal{L}_{q}^{2}(q^{\mathbb{Z}};\mathbb{C}^{2}),0\rangle$, so that we write $\mathcal{H}=\mathcal{D}_{-}\oplus\textbf{K}\oplus\mathcal{D}_{+}$. It follows from the explicit form of the unitary transformation $F_{-}$ that under the mapping $F_{-}$
\begin{equation}\label{3.7}
\mathcal{H}\to \mathcal{L}^{2}(\mathbb{R}),\quad f\to\tilde f_{-}(\lambda)=(F_{-}f)(\lambda),\quad \mathcal{D}_{-}\to H^{2}_{-},\quad \mathcal{D}_{+}\to S_{h}H^{2}_{+}
\end{equation}
and
\begin{equation}\label{3.8}
\textbf{K}\to H^{2}_{+}\oplus S_{h}H^{2}_{+},\quad U_{h}(s)f\to (F_{-}U_{h}(s)F^{-1}_{-}\tilde f_{-})(\lambda)=e^{i\lambda s}\tilde f_{-})(\lambda).
\end{equation}
The formulas \eqref{3.7} and  \eqref{3.8} show that our operator $T_{h}$ is a unitary equivalent to the model dissipative operator with characteristic function $S_{h}(\lambda)$. It is known from the literature that the characteristic functions of unitary equivalent dissipative operators coincide (\cite{20}), so we can present the following theorem.
\begin{theorem}\label{T5}
 The characteristic function of the dissipative operator $T_{h}$ coincides with the function $S_{h}(\lambda)$ defined in \eqref{3.2} \end{theorem}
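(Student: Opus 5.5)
The plan is to realize $T_h$, through the incoming spectral representation $F_{-}$, as the Lax--Phillips model operator $L$ acting in $\mathcal{K}=H^{2}_{+}\ominus S_hH^{2}_{+}$. Once that is done, the identification of the characteristic function follows from the basic fact recalled above: the characteristic function of $L$ is the inner function $S$ defining $\mathcal{K}$, and unitarily equivalent dissipative operators have the same characteristic function \cite{20}.

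\emph{Step 1: $S_h$ is inner in the upper half-plane.} For $\operatorname{Im}\lambda>0$ we have $\operatorname{Im}m_\infty(\lambda)>0$ and $\operatorname{Im}h>0$, so $m_\infty(\lambda)$ lies closer to $h$ than to $\overline h$. Hence $|S_h(\lambda)|<1$, and $S_h$ is holomorphic there. Since $m_\infty$ is assumed meromorphic and real on $\mathbb{R}$ away from its poles, $|S_h(\lambda)|=1$ for real $\lambda$. These two facts together make $S_h$ inner.

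\emph{Step 2: image of the decomposition under $F_{-}$.} Write $\mathcal{H}=\mathcal{D}_{-}\oplus\mathbf{K}\oplus\mathcal{D}_{+}$. By the Parseval equality and Lemma~\ref{L7} together with \eqref{3.6}, $F_{-}$ is unitary from $\mathcal{H}$ onto $\mathcal{L}^2(\mathbb{R})$. The computation already made gives $F_{-}\mathcal{D}_{-}=H^{2}_{-}$. For $f=\langle0,0,\phi_{+}\rangle\in\mathcal{D}_{+}$ we use \eqref{3.3}:
\[
F_{-}f=\tfrac{1}{\sqrt{2\pi}}\,S_h(\lambda)\int_0^\infty\phi_{+}(\tau)e^{i\lambda\tau}\,d\tau .
\]
As $\phi_{+}$ ranges over $\mathcal{L}^2(\mathbb{R}_{+})$, the integral ranges over $H^{2}_{+}$, so $F_{-}\mathcal{D}_{+}=S_hH^{2}_{+}$. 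Orthogonality of the three summands then forces $F_{-}\mathbf{K}=H^{2}_{+}\ominus S_hH^{2}_{+}$. This is \eqref{3.7}--\eqref{3.8}.

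\emph{Step 3: the semigroups correspond.} Since $F_{-}U_h(s)F_{-}^{-1}$ is multiplication by $e^{i\lambda s}$, the semigroup $Z_h(s)=P_1U_h(s)P_2$ from Theorem~\ref{T3} is carried to $\phi\mapsto P_{\mathcal{K}}[e^{i\lambda s}\phi]$. To check this, for $f\in\mathbf{K}$ decompose $U_h(s)f$ along $\mathcal{D}_{-}\oplus\mathbf{K}\oplus\mathcal{D}_{+}$. For $s\ge0$ the $\mathcal{D}_{-}$-component vanishes, because $U_h(-s)\mathcal{D}_{-}\subset\mathcal{D}_{-}$ and $\mathcal{D}_{-}\perp\mathbf{K}$. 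Thus projecting onto $\mathbf{K}$ amounts to projecting from $H^2_+$ onto $\mathcal{K}$. The generators therefore correspond, and Theorem~\ref{T3} ($B_h=T_h$) shows that $T_h$ is unitarily equivalent to $L$. The theorem follows.

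The main obstacle is Step 2's claim that $F_{-}$ is onto. This rests on $H_{-}=\mathcal{H}$, which uses the simplicity in Theorem~\ref{T3*}, and on the identity \eqref{3.6}. One also has to verify that the components of $U^{-}_\lambda$ give exactly $S_h$ rather than $\overline{S_h}$ on $\mathcal{D}_{+}$, because the inner product conjugates the third component. I would check that sign and conjugation convention first.
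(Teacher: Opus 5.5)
Your proposal is correct and follows the same route as the paper. You transport $\mathcal{H}=\mathcal{D}_-\oplus\mathbf{K}\oplus\mathcal{D}_+$ by $F_-$ onto $H^2_-\oplus(H^2_+\ominus S_hH^2_+)\oplus S_hH^2_+$ as in \eqref{3.7}--\eqref{3.8}, note that $U_h(s)$ becomes multiplication by $e^{i\lambda s}$, and conclude by unitary equivalence with the Lax--Phillips model operator. You also make explicit two points the paper leaves implicit: that $S_h$ is inner, and that the $\mathcal{D}_-$-component of $U_h(s)f$ vanishes for $s\ge 0$. Your conjugation worry resolves in your favour, since for real $\lambda$ the third component of $U^-_\lambda$ contributes $\overline{\overline{S_h(\lambda)}e^{-i\lambda\tau}}=S_h(\lambda)e^{i\lambda\tau}$.
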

\section{Completeness theorems for the system of eigenvectors and associated vectors of the dissipative operators}
In this section, we prove theorems on the completeness of the system of eigenvectors and associated vectors of dissipative $q$-Dirac operators by using the theory of characteristic function. It is known that the characteristic function of a dissipative operator contains complete information about the spectral properties of the operator \cite{17,20}. For example, if the characteristic function $S_{h}(\lambda)$ admits a factorization of the form $S_{h}(\lambda)=s(\lambda)\mathbb{B}_{h}(\lambda)$ and the singular factor $s(\lambda)$ is absent, this ensures that the system of eigenvectors and associated vectors of the dissipative $q$-Dirac operator $T_{h}$, where $\mathbb{B}(\lambda)$ is a Blaschke product.
\begin{theorem}\label{T6}
For all $h$ such that the imaginary part of $h$ is positive, except possibly for a single value $h=h_{0}$, the characteristic function $S_{h}(\lambda)$ of the dissipative operator $T_{h}$ is a Blaschke product, and the spectrum of  $T_{h}$ is purely discrete and belongs to the open upper half-plane. The operator $T_{h}$ $(h\neq h_{0})$ has a countable number of isolated eigenvalues with finite multiplicities and limit points at infinity, and the system of eigenvector and associated vectors of this operator is complete in $\mathcal{L}_{q}^{2}(q^{\mathbb{Z}};\mathbb{C}^{2})$.
\end{theorem}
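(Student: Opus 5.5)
The plan is to work directly with the explicit characteristic function from Theorem \ref{T5}, $S_{h}(\lambda)=\frac{m_{\infty}(\lambda)-h}{m_{\infty}(\lambda)-\overline{h}}$, and to use the standing assumption that $m_{\infty}$ is meromorphic in $\mathbb{C}$. Since $S_h$ is then meromorphic in the whole plane, it is analytic in the closed upper half-plane apart from isolated points. By Theorem \ref{T5} it is also an inner function. The canonical factorization of a bounded analytic function in $\mathbb{C}_{+}$ therefore reduces to $S_h(\lambda)=e^{i\lambda c}\,\mathbb{B}_h(\lambda)$, with $c\ge 0$ and $\mathbb{B}_h$ a Blaschke product.

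The reason only an exponential can appear is this. A singular inner factor whose measure has mass on a finite set of real points would force essential singularities of $S_h$ at those points. Meromorphy of $m_{\infty}$ excludes this: at real poles of $m_\infty$ we simply get $S_h\to1$, and elsewhere the denominator $m_\infty-\overline h$ is nonzero on $\mathbb{R}$ since $\operatorname{Im}\overline h<0$. So the only remaining singular factor is the one with mass at infinity, namely $e^{i\lambda c}$. The main task is to show $c=0$ for all but at most one $h$.

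For that step I will argue by contradiction, as in the classical Sturm--Liouville and Dirac arguments. Suppose $c>0$ for some $h$, so that $S_h(iy)$ decays like $e^{-cy}$ as $y\to+\infty$ along the imaginary axis. Solving the formula for $m_\infty$ gives
\[
m_{\infty}(\lambda)=\frac{h-\overline{h}S_h(\lambda)}{1-S_h(\lambda)},
\]
so $m_\infty(iy)\to h$ exponentially fast.

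Now suppose two distinct values $h_1\neq h_2$ both had singular factors $e^{i\lambda c_1}$ and $e^{i\lambda c_2}$. Then $m_\infty(iy)$ would tend to both $h_1$ and $h_2$, which is impossible. Hence at most one exceptional value $h_0$ exists. This uniqueness step, and making sure no other singular measure can sneak in, is where I expect the main obstacle to lie.

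For $h\neq h_0$, $S_h$ is a Blaschke product. Its zeros in $\mathbb{C}_+$, i.e.\ the solutions of $m_\infty(\lambda)=h$, are exactly the eigenvalues of $T_h$. They are isolated with finite multiplicities, since the resolvent is compact by meromorphy of $m_\infty$, as in the proof of Theorem \ref{T3*}. They accumulate only at infinity, they are countable, and they lie in the open upper half-plane because of Theorem \ref{T1} together with simplicity. Finally I invoke the Sz.-Nagy--Foia\c{s} theorem \cite{20}: for a completely non-selfadjoint dissipative operator (Theorem \ref{T3*}), the system of eigenvectors and associated vectors is complete if and only if the characteristic function is a Blaschke product. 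This gives completeness in $\mathcal{L}_{q}^{2}(q^{\mathbb{Z}};\mathbb{C}^{2})$.
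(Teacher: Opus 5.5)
Your proposal follows the same route as the paper: factor the inner, meromorphic $S_h$ as $e^{i\lambda d}\mathbb{B}_h(\lambda)$, invert (3.2) to see that $d(h)>0$ forces $m_\infty(iy)\to h$, and use the $h$-independence of $m_\infty$ to allow at most one exceptional $h_0$. You also spell out what the paper leaves implicit: the eigenvalues as zeros of $S_h$, and completeness via the Sz.-Nagy--Foia\c{s} criterion for the simple operator $T_h$.

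Your limit $m_\infty(iy)\to h$ (so $h_0=\lim_{y\to\infty}m_\infty(iy)$) is the correct one; the paper's $-h$ is a sign slip. The exclusion of a singular factor on $\mathbb{R}$ is cleaner if stated as follows: $S_h$ continues analytically across all of $\mathbb{R}$, which rules out any singular measure there, not only atomic ones.
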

\begin{proof}
It is written from the explicit formula \eqref{3.2} that $S_{h}(\lambda)$ is an inner function in the upper half-plane, and moreover, it is a meromorphic in the whole complex $\lambda$-plane. Therefore it can be factored as follows
\begin{equation}\label{4.1}
S_{h}(\lambda)=e^{i\lambda d}\mathbb{B}_{h}(\lambda),\quad d=d(h)\geq 0, 
\end{equation}
where $\mathbb{B}_{h}(\lambda)$ is a Blaschke product. It follows from \eqref{4.1} that 
\begin{equation}\label{4.2}
\left| S_{h}(\lambda) \right|=\left| e^{i\lambda d} \right|\left| \mathbb{B}_{h}(\lambda) \right|\leq e^{-d(h)\operatorname{Im} \lambda},\quad \operatorname{Im} \lambda\geq 0.
\end{equation}
Then expressing $m_{\infty}(\lambda)$ in terms of $S_{h}(\lambda)$, we find
\begin{equation}\label{4.3}
m_{\infty}(\lambda)=\frac{\overline{h}S_{h}(\lambda)-h}{S_{h}(\lambda)-1}    
\end{equation}
by using \eqref{3.2}. If $d(h)>0$ for a given value $h$ with $\operatorname{Im}\lambda>0$, then by \eqref{4.3}, we obtain $\lim_{s\to\infty}S_{h}(is)=0$, which together with \eqref{4.3} implies that $\lim_{s\to\infty}m_{\infty}(is)=-h$. Since $m_{\infty}(\lambda)$ is independent of $h$, $d(h)$ can be nonzero at not more than a single point $h=h_{0}$ and, further, $h_{0}=-\lim_{s\to\infty}m_{\infty}(is)$. It completes the proof.
\end{proof}
Note that all results which are obtained for dissipative operators can also be represented for accumulative operators, because a linear operator $A$ acting in a Hilbert space $H$ is accumulative if and only if $-A$ is dissipative. Then by using the previous theorem, we can directly write the following result.
\begin{remark}
Let the function $m_{\infty}$ be meromorphic in $\mathbb{C}$. Then, for all values of $h$ with $\operatorname{Im}h<0$, except possibly for a single value $h=h_{1}$, the spectrum of accumulative operator $T_{h}$ is purely discrete and belongs to the open lower half-plane. The operator $T_{h}\quad (h\neq h_{1})$ has a countable number of isolated eigenvalues with finite multiplicities and limit points at infinity, and the system of all eigenvectors and associated vectors of the operator $T_{h}$ is complete in $\mathcal{L}_{q}^{2}(q^{\mathbb{Z}};\mathbb{C}^{2})$.
\end{remark}

\textbf{Acknowledgements}  The authors would like to thank the reviewers for their reviews.

\textbf{Data Availability} Our manuscript does not have associated data.
\section*{Declarations}
\textbf{Conflict of interest} The authors declare that they have no conflict of interest.

\end{document}